\numberwithin{equation}{section}
\newcommand{\codim}{\operatorname{codim}}
\newcommand{\NE}{\overline{\operatorname{NE}}}
\newcommand{\Mor}{\operatorname{Mor}}
\newtheorem{theorem}{Theorem}[section]
\newtheorem{lemma}[theorem]{Lemma}
\newtheorem{proposition}[theorem]{Proposition}
\theoremstyle{definition}
\definecolor{ao(english)}{rgb}{0.0, 0.5, 0.0}
\def\dasharrowfill@#1#2#3#4{%
        $\m@th
        \thickmuskip0mu
        \medmuskip\thickmuskip
        \thinmuskip\thickmuskip
        \relax
        #4#1\mkern2mu
        \xleaders\hbox{$#4\mkern2mu#2\mkern2mu$}\hfill
        \mkern2mu
        #3$%
}
\def\dashrightarrowfill@{\dasharrowfill@\relbar\relbar\rightarrow}
\providecommand*\xdashrightarrow[2][]{%
  \ext@arrow 0055{\dashrightarrowfill@}{#1}{#2}}
\begin{document}

\title{Optimal bounds in Bend-and-Break}
%\author[]{}

\author{Eric Jovinelly}
\address{Department of Mathematics \\
Brown University \\
Box 1917 \\
151 Thayer Street \\
Providence, RI, 02912}
\email{eric\_jovinelly@brown.edu}

\author{Brian Lehmann}
\address{Department of Mathematics \\
Boston College  \\
Chestnut Hill, MA \, \, 02467}
\email{lehmannb@bc.edu}

\author{Eric Riedl}
\address{Department of Mathematics \\
University of Notre Dame  \\
255 Hurley Hall \\
Notre Dame, IN 46556}
\email{eriedl@nd.edu}

\begin{abstract}
We improve the Bend-and-Break result of Miyaoka and Mori by establishing the optimal degree bound.  Our result also yields optimal bounds on lengths of extremal rays of log canonical pairs.
\end{abstract}

\maketitle

\section{Introduction}

Mori's Bend-and-Break lemma is a fundamental tool for working with curves on projective varieties.  Different versions of this important result have been established by \cite{Mori79, Mori82, MM86, Kollar}. Our main goal is to strengthen \cite[Theorem 5]{MM86} and to apply it to lengths of extremal rays, answering questions posed by \cite{Kollar, Nikulin96, Matsuki02, Fujino11} (and others).

\begin{theorem} \label{theo:cleanbandb}
Let $X$ be a projective variety over an algebraically closed field of arbitrary characteristic.  Let $H$ be a nef $\mathbb{R}$-Cartier divisor on $X$.  Suppose there exists an irreducible curve $C \subset X$ contained in the smooth locus of $X$ such that
$$K_X \cdot C < 0.$$
Then for every closed point $x \in C$, there exists a rational curve $R$ containing $x$ such that
$$H \cdot R \leq (\dim X + 1) \frac{H \cdot C}{-K_X \cdot C}.$$
\end{theorem}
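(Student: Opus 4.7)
The plan is to follow the classical Mori / Miyaoka--Mori approach via Frobenius and Bend-and-Break in positive characteristic, and then to sharpen the final degree bookkeeping in order to achieve the optimal constant $\dim X + 1$ rather than $2\dim X$.

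First I would reduce to positive characteristic by spreading out: choose a finitely-generated $\mathbb{Z}$-algebra $A$ over which $X$, $H$, $C$, and $x$ are all defined, and specialize to the residue field at a closed point of $\Spec A$. Standard arguments on Hilbert schemes of rational curves of bounded degree reduce the theorem to proving its conclusion at the specialization, so I may assume the base field has characteristic $p > 0$.

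Next I would invoke the Frobenius trick. Let $\nu : \tilde C \to X$ be the normalization of $C$ (landing in $X^{\smooth}$) and $q \in \tilde C$ a preimage of $x$. For each $e \geq 0$ the composition $f_e := \nu \circ F^e$ with the $e$-th iterate of the absolute Frobenius of $\tilde C$ satisfies $(f_e)_\ast \tilde C = p^e \nu_\ast \tilde C$, so that the intersection numbers with $H$ and $-K_X$ both scale by $p^e$ while the genus of $\tilde C$ remains fixed. The standard infinitesimal deformation bound gives
\[
\dim_{[f_e]} \Mor(\tilde C, X;\, q \mapsto x) \;\geq\; p^e(-K_X \cdot C) - (\dim X)\, g(\tilde C),
\]
which is strictly positive for $e \gg 0$. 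Picking a smooth projective curve $B$ in this moduli space through $[f_e]$ yields a morphism $\phi : B \times \tilde C \to X$ contracting $B \times \{q\}$ to $x$, and the classical Bend-and-Break argument then shows that over some point of $B$ the fiber of $\phi$ specializes to a reducible $1$-cycle containing a rational curve through $x$.

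I would then iterate Bend-and-Break on rational components whose $(-K_X)$-degree exceeds $\dim X + 1$, continuing to fix $x$ at each stage. Pushing the iteration to completion should produce an effective decomposition $(f_e)_\ast \tilde C \equiv \sum_i a_i R_i$ in $N_1(X)$, with each $R_i$ a rational curve satisfying $-K_X \cdot R_i \leq \dim X + 1$ and at least one $R_i$ containing $x$. A pigeonhole applied to the two identities
\[
\sum_i a_i (H \cdot R_i) = p^e (H \cdot C), \qquad \sum_i a_i (-K_X \cdot R_i) = p^e (-K_X \cdot C)
\]
produces an index $i$ with $(H \cdot R_i)/(-K_X \cdot R_i) \leq (H \cdot C)/(-K_X \cdot C)$, and combining this with $-K_X \cdot R_i \leq \dim X + 1$ yields the desired inequality $H \cdot R_i \leq (\dim X + 1)(H \cdot C)/(-K_X \cdot C)$. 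The main technical obstacle, and the point at which the classical constant $2\dim X$ of Miyaoka--Mori is improved to $\dim X + 1$, is arranging for the pigeonhole to apply to a component \emph{through $x$}: the naive decomposition guarantees only that \emph{some} $R_i$ has the correct ratio. Overcoming this requires threading the point $x$ carefully through every stage of the iterated Bend-and-Break and a refined analysis of how the $(-K_X)$-degree is distributed among the components that contain $x$ versus those that do not.
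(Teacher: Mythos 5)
Your outline reproduces the classical Miyaoka--Mori skeleton (spreading out, Frobenius, Bend-and-Break fixing $x$, pigeonhole on a decomposition), but the step you flag at the end as ``the main technical obstacle'' is the entire content of the improvement from $2\dim X$ to $\dim X+1$, and you do not supply an argument for it. Two things go wrong. First, the claimed effective decomposition $(f_e)_*\tilde C \equiv \sum_i a_i R_i$ with \emph{every} $R_i$ a rational curve of $-K_X$-degree at most $\dim X+1$ is not available: iterated Bend-and-Break only lets you keep breaking the rational component through the marked point, whose deformations you control, while the residual pieces that split off are arbitrary effective $1$-cycles --- they need not be rational, need not be $K_X$-negative, need not lie in the smooth locus, and hence need not move at all. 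Without an extremality hypothesis (as in the cone theorem) there is no way to force them into such a decomposition. Second, even granting the decomposition, the pigeonhole over $\sum_i a_i(H\cdot R_i)$ and $\sum_i a_i(-K_X\cdot R_i)$ selects \emph{some} index with a good ratio, and nothing makes that index correspond to a component through $x$; ``threading $x$ through every stage'' is exactly what the classical argument cannot do better than $2\dim X$ with, so asserting a ``refined analysis'' without giving one leaves the theorem unproved.

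The paper resolves this by a different device. Instead of fixing the single point $x$, it fixes $k(m)\approx p^m(-K_X\cdot C)/(\dim X+1)$ \emph{general} points of $C$ at once; the key lemma (Bend-and-Shatter, Lemma \ref{lem:genusGBendandBreak}, packaged via Proposition \ref{prop:totalbreaking}) shows that a $k$-dimensional family fixing $k$ points degenerates to a stable map with a non-contracted rational tree through \emph{each} of the $k$ points. Nefness of $H$ then bounds the minimum of the $H$-degrees of these $k$ curves by $(H\cdot s_{m*}C')/k(m)$, which tends to the optimal bound as $m\to\infty$; no control of $-K_X\cdot R_i$ for individual components is needed. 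Finally, the conclusion is transported from a general point of $C$ to the given point $x$ by observing that ``lies on a rational curve of $H$-degree at most $d$'' is a closed condition on points of $s(C')$. If you want to salvage your approach, this last observation is the move you are missing: prove the bound through a general point (where you have many curves to pigeonhole over) and then specialize to $x$, rather than trying to carry $x$ through the degeneration.
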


The constant $(\dim X + 1)$ in Theorem \ref{theo:cleanbandb} improves the constant $(2 \dim X)$ given in \cite[Theorem 5]{MM86}. 
 Our improvement is optimal as we may let $X$ be $\mathbb{P}^{n}$ and $H$ be a hyperplane.

The proof of \cite[Theorem 5]{MM86} uses the fact that a one-dimensional family of maps $C \to X$ with a fixed point must break off a rational curve.  Our key technical statement (``Bend-and-Shatter'', Lemma \ref{lem:genusGBendandBreak}) shows that a $k$-dimensional family of curves $C \to X$ that fixes $k$ points must break off $k$ rational curves.  When combined with the reduction steps of \cite[Theorem 5]{MM86} and \cite[II.5.8 Theorem]{Kollar}, we obtain a quick proof of Theorem \ref{theo:cleanbandb}.

\subsection{Extremal rays}
One of Mori's first applications for Bend-and-Break was the study of extremal rays of the pseudo-effective cone of curves.  \cite[Theorem 1.4]{Mori82} proved that for a smooth projective variety $X$ every $K_{X}$-negative extremal ray of the pseudo-effective cone contains a rational curve $C$ satisfying $-K_{X} \cdot C \leq \dim X + 1$.

For a klt pair $(X,\Delta)$ with $\mathbb{Q}$-coefficients, \cite{Kawamata91} proved an analogous statement with the upper bound $(2 \dim X)$.  This was extended to dlt pairs with $\mathbb{R}$-coefficients by Shokurov in the appendix to \cite{Nikulin96} and by \cite[Theorem 3.8.1]{BCHM10}.   Using Theorem \ref{theo:cleanbandb} in place of \cite[Theorem 5]{MM86} in these arguments, we obtain the optimal degree bound:

\begin{theorem} \label{theo:kltextremalrays}
Let $(X,\Delta)$ be a dlt pair over an algebraically closed field of characteristic $0$.  Suppose that $\pi: X \to Z$ is the contraction of a $(K_{X} + \Delta)$-negative extremal face $\mathcal{R}$ of $\NE(X)$.  For any positive-dimensional irreducible component $F$ of a fiber of $\pi$, there is a rational curve $C$ in $F$ satisfying:
\begin{enumerate}
\item The class of $C$ is contained in the face $\mathcal{R}$.
\item The deformations of $C$ sweep out $F$.
\item $-(K_{X} + \Delta) \cdot C \leq \dim F + 1$.
\end{enumerate}
If furthermore $(X,\Delta)$ is klt and $\pi$ is a birational contraction, then we can ensure a strict inequality in (3).
\end{theorem}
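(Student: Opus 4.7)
The plan is to follow the standard length-of-extremal-rays arguments — \cite{Kawamata91} for the klt $\mathbb{Q}$-case, Shokurov's appendix to \cite{Nikulin96}, and \cite[Theorem~3.8.1]{BCHM10} for the dlt case — substituting Theorem~\ref{theo:cleanbandb} at the point where \cite[Theorem~5]{MM86} is invoked. Every step of those arguments remains intact; the only numerical input that changes is the Bend-and-Break constant, which drops from $2\dim X$ to $\dim X + 1$. After the usual cutting-down reduction to a fiber, this yields the advertised $\dim F + 1$ bound.

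Concretely, I would fix a nef $\mathbb{R}$-Cartier divisor $L$ on $X$ with $L^{\perp} \cap \NE(X) = \mathcal{R}$, so that $L = \pi^* L_Z$ for an ample class $L_Z$ on $Z$, and choose a general point $x \in F$. To make the bound depend on $\dim F$ rather than $\dim X$, I would cut $X$ by $\dim Z$ general members of $|\pi^{*}A|$ passing through $\pi(F)$ for a sufficiently ample $A$ on $Z$; this isolates $F$ as an irreducible component of an ambient subvariety of dimension $\dim F$, preserves intersection numbers with curves inside $F$, and forces the restriction of $L$ to vanish.

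Inside this cut-down ambient I would then produce a curve $C$ through $x$, contained in the smooth locus of $X$ and disjoint from $\lfloor\Delta\rfloor$, satisfying $(K_X+\Delta)\cdot C < 0$. Applying Theorem~\ref{theo:cleanbandb} with a small perturbation $H$ of $L$ that is strictly positive off $\mathcal{R}$ yields a rational curve $R$ through $x$ with
\[
H \cdot R \leq (\dim F + 1)\,\frac{H \cdot C}{-(K_X+\Delta)\cdot C}.
\]
Letting $H \to L$ and invoking the compactness of families of rational curves of bounded degree from \cite[II.5]{Kollar}, I extract a rational curve $R_0$ whose class lies in $\mathcal{R}$ and satisfies $-(K_X+\Delta)\cdot R_0 \leq \dim F + 1$. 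Generality of $x$ in $F$ gives assertions (1)--(3).

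The main obstacle will be producing the test curve $C$ inside the smooth locus of $X$ and disjoint from $\lfloor\Delta\rfloor$, since the dlt hypothesis only guarantees a dense open set with these properties while Theorem~\ref{theo:cleanbandb} requires $C$ to lie in the smooth locus. Standard Bertini-and-perturbation arguments working on the snc locus furnished by dlt should handle this. For the strict inequality in the final klt birational assertion, one exploits that the exceptional locus of a birational contraction has strictly smaller dimension and that klt yields a discrepancy gap; this sharpens the test-curve bound just enough to force a strict inequality in (3), as in the last part of \cite[Theorem~3.8.1]{BCHM10}.
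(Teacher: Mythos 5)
Your top-level strategy coincides with the paper's: the paper's entire proof of this theorem is the remark that one runs the arguments of \cite{Kawamata91}, Shokurov's appendix to \cite{Nikulin96}, and \cite[Theorem 3.8.1]{BCHM10} with Theorem \ref{theo:cleanbandb} substituted for \cite[Theorem 5]{MM86}, so the Bend-and-Break constant $2\dim X$ drops to $\dim X+1$. At that level you and the paper agree completely, and your description of the supporting nef divisor $L=\pi^*L_Z$, the limiting/compactness step, and the klt birational refinement all point at the right places in those references.

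The one step where you commit to a concrete mechanism of your own --- cutting $X$ by $\dim Z$ general members of $|\pi^*A|$ through $\pi(F)$ in order to trade $\dim X$ for $\dim F$ --- is not how those references obtain the fiber-dimension bound, and as written it does not work. The common zero locus of $\dim Z$ general members of $|\pi^*A|$ containing $\pi(F)$ is a union of \emph{entire fibers} of $\pi$: it is not irreducible, the intersection is non-proper along $F$ whenever $\dim F>\dim X-\dim Z$, and it is in general singular along $F$ with no usable canonical divisor, so Theorem \ref{theo:cleanbandb} cannot be applied to it; applying Theorem \ref{theo:cleanbandb} to $X$ itself only yields $\dim X+1$. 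The mechanism actually used in \cite{Kawamata91} and \cite{Fujino11} is different: once the test curve $C$ has class in $\mathcal{R}$, every deformation of $C$ is still contracted by $\pi$, so the deformations through a general point $x\in F$ all lie in $F$; hence fixing the image of a marked point imposes at most $\dim F$, not $\dim X$, conditions in the Bend-and-Break dimension count. What must therefore be substituted into those arguments is the variant of Theorem \ref{theo:cleanbandb} (equivalently, of the estimate $\codim(T_i,M)\leq \dim X$ in the proof of Proposition \ref{prop:totalbreaking}) in which $\dim X$ is replaced by the dimension of the locus swept out by the relevant deformations of $C$; that is where $\dim F+1$ comes from. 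As it stands, the passage from $\dim X+1$ to $\dim F+1$ is exactly the point where your sketch needs repair.
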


The arguments of \cite[Theorem 18.2]{Fujino11} extend this result to lc pairs with $\mathbb{R}$-coefficients.

\begin{theorem} \label{theo:introlengthray}
Let $(X,\Delta)$ be an lc pair over an algebraically closed field of characteristic $0$.  Then every $(K_{X}+\Delta)$-negative extremal ray of the pseudo-effective cone of curves is generated by a rational curve $C$ with $-(K_{X} + \Delta) \cdot C \leq \dim X+1$. 
\end{theorem}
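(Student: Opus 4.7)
The plan is to reduce the lc case to the dlt case via a dlt modification and then invoke Theorem~\ref{theo:kltextremalrays}, following the strategy of \cite[Theorem~18.2]{Fujino11}. Fix a $(K_X+\Delta)$-negative extremal ray $R$ of $\NE(X)$. First I would produce a dlt modification $f\colon (Y,\Delta_Y) \to (X,\Delta)$: a proper birational morphism from a normal variety $Y$ with an $\mathbb{R}$-boundary $\Delta_Y$ such that $(Y,\Delta_Y)$ is dlt and $K_Y+\Delta_Y = f^*(K_X+\Delta)$. In characteristic zero such a modification is supplied by BCHM, combined with a small perturbation of the $\mathbb{R}$-coefficients of $\Delta$ to enable a standard MMP run.

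Next I would lift $R$ to an extremal ray on $Y$. Since $f$ is birational, $f_*\NE(Y) = \NE(X)$. Applying $f_*$ to the dlt cone decomposition
\[
\NE(Y) \;=\; \NE(Y)_{K_Y+\Delta_Y \geq 0} + \sum_i \mathbb{R}_{\geq 0}\,[C_i],
\]
in which each $C_i$ spans a $(K_Y+\Delta_Y)$-negative extremal ray $\tilde R_i$, produces a decomposition of $\NE(X)$. The image $f_*\NE(Y)_{K_Y+\Delta_Y\geq 0}$ lies in $\NE(X)_{K_X+\Delta\geq 0}$ because $K_Y+\Delta_Y = f^*(K_X+\Delta)$, so the extremality of the $(K_X+\Delta)$-negative ray $R$ forces $R = f_*\tilde R$ for some index, with $\tilde R = \tilde R_i$.

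Finally I would apply Theorem~\ref{theo:kltextremalrays} to the contraction $\pi\colon Y \to Z$ of $\tilde R$, obtaining a rational curve $C_Y$ in a positive-dimensional fiber of $\pi$ with $[C_Y] \in \tilde R$ and $-(K_Y+\Delta_Y)\cdot C_Y \leq \dim Y+1 = \dim X+1$. Because $f_*\tilde R = R \neq 0$, the curve $C_Y$ is not contracted by $f$, so its reduced image $C := f(C_Y)_{\mathrm{red}}$ is a rational curve on $X$. Writing $d := \deg(C_Y \to C) \geq 1$, we have $f_*[C_Y] = d[C]$, so $[C]$ generates $R$, and the projection formula gives
\[
-(K_X+\Delta)\cdot C \;=\; \tfrac{1}{d}\bigl(-(K_Y+\Delta_Y)\cdot C_Y\bigr) \;\leq\; \dim X+1.
\]
The main obstacle is the first step: securing a dlt modification for an lc $\mathbb{R}$-pair in characteristic zero, together with the cone theorem for the resulting dlt $\mathbb{R}$-pair used in lifting $R$. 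Once Fujino's machinery supplies these ingredients, the remainder of the argument is essentially formal.
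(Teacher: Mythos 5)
Your proposal is correct and is essentially the argument the paper has in mind: the paper proves this theorem by simply invoking ``the arguments of \cite[Theorem 18.2]{Fujino11},'' which are exactly the dlt-modification, ray-lifting, and pushforward steps you describe, with Theorem \ref{theo:kltextremalrays} supplying the improved bound $\dim F + 1 \leq \dim X + 1$ in place of Fujino's $2\dim X$. The ingredients you flag as the main obstacle (existence of dlt modifications for lc $\mathbb{R}$-pairs and the cone theorem for dlt $\mathbb{R}$-pairs) are precisely what \cite{Fujino11} and \cite{BCHM10} provide, so nothing further is needed.
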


Note that this length bound was known previously for toric varieties by \cite{Fujino03} and in the setting of LCIQ singularities by \cite{CT09}.

\vspace{.2in}

\noindent \textbf{Acknowledgements:}
We thank Izzet Coskun, Brendan Hassett, James M\textsuperscript{c}Kernan, Joaqu\'in Moraga, and Sho Tanimoto for helpful conversations.  We are grateful to Yuchen Liu for suggesting we think about lengths of extremal rays.

Eric Jovinelly was supported by an NSF postdoctoral research fellowship, DMS-2303335. Brian Lehmann was supported by Simons Foundation grant Award Number 851129.  Eric Riedl was supported by NSF CAREER grant DMS-1945944, and Simons Foundation grants 00011850 and 00013673.

\section{Breaking curves: low degree rational curves}

In this section, we establish Bend-and-Shatter and use it to prove Theorem \ref{theo:cleanbandb}.  We let $\overline{\mathcal{M}}_{g,n}(X)$ denote the Kontsevich moduli stack of stable maps and let $\mathcal{M}_{g,n}(X)$ denote the open substack of maps with smooth irreducible domain.

\begin{lemma}[Bend-and-Shatter] \label{lem:genusGBendandBreak}
Let $X$ be a projective variety over an algebraically closed field of arbitrary characteristic.  Fix a stable irreducible marked curve $(C, q_1, \dots, q_r) \in \mathcal{M}_{g,r}$.

For some $k \leq r$, let $p_1, \dots, p_k$ be points of $X$.  Suppose there exists a $k$-dimensional locally closed substack $S \subset \mathcal{M}_{g,r}(X,\beta)$ parametrizing pointed maps $s: (C, q_1, \ldots , q_r) \to X$ with $s(q_i) = p_i$ for all $i\leq k$.  Then there exists a stable map $s' : (C', q'_1, \dots, q'_r) \to X$ in the closure of $S$ in $\overline{\mathcal{M}}_{g,r}(X,\beta)$ such that 
\begin{itemize}
\item $s'(q'_i) = p_i$ for all $i\leq k$;
\item for each $i \leq k$ there is a tree of rational curves $C'_{i} \subset C'$ such that $q'_i \in C'_i$ and $s'$ does not contract $C_i'$ to a point; and 
\item the stabilization of $(C', q'_1, \dots, q'_r)$ is $(C, q_1, \dots, q_r)$.  In particular, the stabilization map $(C', q'_1, \dots, q'_r) \to (C, q_1, \dots, q_r)$ contracts $C'_i$ to $q_i \in C$.
\end{itemize}
\end{lemma}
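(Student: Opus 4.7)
The plan is to iteratively apply Mori's classical Bend-and-Break inside the closure $T := \overline{S} \subset \overline{\mathcal{M}}_{g,r}(X,\beta)$, peeling off one rational tree at each of the marked points $q_1, \ldots, q_k$ in succession.

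First I would observe that, because the stabilization morphism $\overline{\mathcal{M}}_{g,r}(X,\beta) \to \overline{\mathcal{M}}_{g,r}$ is proper and contracts $S$ to the single point $[(C, q_1, \ldots, q_r)]$, every stable map parametrized by $T$ has stabilization $(C, q_1, \ldots, q_r)$, and the evaluation conditions $\ev_i = p_i$ for $i \leq k$ extend from $S$ to all of $T$.

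Next I would construct a descending chain $T_0 \supset T_1 \supset \cdots \supset T_k$ of irreducible closed substacks of $\overline{\mathcal{M}}_{g,r}(X,\beta)$, where $T_0$ is an irreducible component of $T$ containing $S$, and each $T_i$ parametrizes stable maps in $T_{i-1}$ whose domain additionally has a non-trivially mapped rational tree at $q_i$ containing $q'_i$, subject to the bound $\dim T_i \geq k - i$. To pass from $T_{i-1}$ to $T_i$, I would use $\dim T_{i-1} \geq k - i + 1 \geq 1$ to pick a general smooth curve $B \subset T_{i-1}$. Because $T_{i-1}$ has no rational tree at $q_i$ yet, the point $q_i$ lies on the main component $C$ with $\ev_i \equiv p_i$ along $B$, and classical Bend-and-Break applied to this 1-parameter family of main-component maps would produce a limit $s_i \in \overline{B} \subset T_{i-1}$ whose domain gains a rational tree at $q_i$ of some class $\beta_i$. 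This limit lies in a Cartier boundary divisor $\Delta_{q_i, \beta_i} \subset \overline{\mathcal{M}}_{g,r}(X,\beta)$. Since a general such $B$ is not contained in $\Delta_{q_i, \beta_i}$, Krull's Hauptidealsatz would yield an irreducible component $T_i$ of $T_{i-1} \cap \Delta_{q_i, \beta_i}$ of codimension one, with $\dim T_i = \dim T_{i-1} - 1 \geq k - i$. After $k$ iterations $T_k$ is nonempty, and any point of $T_k$ gives the desired stable map $s'$: the conditions on the stabilization and evaluations are preserved under specialization, and the rational trees accumulated at $q_1, \ldots, q_k$ are exactly those required.

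The main difficulty I expect is ensuring at each step that the main-component map varies non-trivially along a suitable 1-parameter family $B$, so that classical Bend-and-Break genuinely produces a new rational tail at $q_i$ rather than merely rearranging the existing tails at $q_1, \ldots, q_{i-1}$. This should follow from $\dim T_{i-1} \geq k - i + 1$ together with a dimension count: since the $k$ fixed evaluations already pin down the main map in the evaluation directions, the residual positive dimension of $T_{i-1}$ must reflect genuine deformations of the main component (possibly combined with tail deformations). Rigorously extracting such a 1-parameter family, in which the restriction to $C$ is non-constant, and applying classical Bend-and-Break to it, is the technical crux of the argument.
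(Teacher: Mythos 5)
Your strategy (iterated classical Bend-and-Break, cutting down by one boundary divisor per step) diverges from the paper's proof, which never iterates Bend-and-Break at all: the paper adds $k$ auxiliary marked points $q_{r+1},\dots,q_{r+k}$ to the \emph{smooth} locus $\mathcal{M}_{g,r+k}(X,\beta)$, imposes the incidence conditions $s(q_{r+i})\in D_i$ for general very ample divisors $D_i\subset X$ (each costing at most one dimension in the fibers over $\overline{\mathcal{M}}_{g,r+k}$), and then takes a \emph{single} limit in which all $q_{r+i}$ collide with $q_i$; the non-contractedness of each tree is forced because it must join $p_i$ to $D_i$ with $D_i$ disjoint from $p_i$. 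Your route has two genuine gaps that this construction is specifically designed to avoid.

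First, the Hauptidealsatz step is unjustified. The locus $\Delta_{q_i,\beta_i}$ of stable maps whose domain carries a non-contracted rational tree at $q_i$ is not known to be a Cartier (or even locally principal) divisor in $\overline{\mathcal{M}}_{g,r}(X,\beta)$; these moduli spaces are arbitrarily singular and their boundary strata are not locally cut out by one equation. Nor can you pull such a divisor back from $\overline{\mathcal{M}}_{g,r}$: the stabilization map contracts exactly the rational trees you are trying to detect, so the image of $T_i$ in $\overline{\mathcal{M}}_{g,r}$ remains the interior point $[(C,q_1,\dots,q_r)]$ and sees no boundary divisor. Hence nonemptiness of $T_{i-1}\cap\Delta_{q_i,\beta_i}$ does not give $\dim T_i\geq\dim T_{i-1}-1$, and the induction collapses. (By contrast, the paper's conditions $\ev_{r+i}^{-1}(D_i)$ are pullbacks of honest Cartier divisors on $X$, which is why each cuts the fiber dimension by at most one.) Second, the step you yourself flag as the ``technical crux'' is a genuine counterexample risk, not a technicality: the positive dimension of $T_{i-1}$ can be accounted for entirely by deformations of the trees already attached at $q_1,\dots,q_{i-1}$ --- e.g.\ a family of rational curves joining $p_1$ to the fixed attachment point $s(q_1)$ --- with the map on the main component $C$ held constant. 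In that case no non-constant one-parameter family of maps from $C$ fixing $s(q_i)$ exists inside $T_{i-1}$, and classical Bend-and-Break produces nothing at $q_i$. Your dimension heuristic (``the residual positive dimension must reflect genuine deformations of the main component'') is false. The paper sidesteps this by doing all of its dimension counting on families of maps from the fixed irreducible curve $C$ and degenerating to the boundary only once, at the very end.
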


\begin{proof}
Let $U$ be the preimage of $S$ in $\mathcal{M}_{g,r+k}(X, \beta)$ under the map $\pi: \overline{\mathcal{M}}_{g,r+k}(X, \beta) \to \overline{\mathcal{M}}_{g,r}(X, \beta)$ which forgets the last $k$ points. Thus, $U$ parametrizes maps in $S$ together with the choice of $k$ additional points $\{q_{r+i} \}_{i=1}^{k}$ on $C$.   We also let $\psi: \overline{\mathcal{M}}_{g,r+k}(X, \beta) \to \overline{\mathcal{M}}_{g,r+k}$ be the forgetful map and let $\phi : \overline{\mathcal{M}}_{g,r+k} \to \overline{\mathcal{M}}_{g,r}$ be the map forgetting the last $k$ markings.  By construction the closure of the image of $U$ under $\psi$ is the fiber $F$ of $\phi$ over $(C, q_1, \ldots , q_r)$.  Note that the non-empty fibers of $\psi: U \to F$ have dimension $k$.

Fix a very ample line bundle $\mathcal{L}$ on $X$.  Set $U_{0} = U$ and for $1 \leq i \leq k$ define $U_{i}$ inductively by choosing a general section $D_{i}$ of $\mathcal{L}$ and letting $U_{i} \subset U_{i-1}$ be the substack of maps $s$ such that $s(q_{r+i}) \in D_{i}$.  We prove by induction that the dimension of the fiber of $\psi|_{U_{i}}$ over a general point of $F$ is at least $k-i$.

By induction we know the fiber $V_{i-1}$ of $\psi|_{U_{i-1}}$ over a general point $(C,q_{1},\ldots,q_{r+k})$ of $F$ has positive dimension.  Note that the image of $V_{i-1}$ in $\mathcal{M}_{g,0}(X)$ does not depend on the choice of marked points $q_{r+i},\ldots,q_{r+k}$.  Furthermore, this image must have positive dimension since $V_{i-1}$ has positive dimension and parametrizes maps from a fixed marked curve.  Thus for a general point of $F$ the image $s(q_{r+i})$ sweeps out a locus of dimension $\geq 1$ in $X$ as we vary $s \in V_{i-1}$.   We conclude that the preimage of the general divisor $D_{i}$ under the evaluation map $ev_{r+i}|_{U_{i-1}}$ meets $V_{i-1}$.  In particular the dimension of the general fiber of $\psi|_{U_{i}}$ is at most one less than the dimension of the general fiber of $\psi|_{U_{i-1}}$, proving the claim.

Let $\overline{U_{k}}$ be the closure of $U_{k}$ in $\overline{\mathcal{M}}_{g,r+k}(X, \beta)$.  There is an element of $\overline{U_{k}}$ lying over the locus in $F$ where $q_{r+i}$ specializes to $q_i$ for each $i$. Let $s':(C', q'_1, \dots, q'_{r+k}) \to X$ be the corresponding stable map. Because $\pi(s')$ lies in the closure $\overline{S}$ of $S$ in $\overline{\mathcal{M}}_{g,r}(X,\beta)$, we know that the stabilization of $(C',q'_1,\dots,q'_r)$ must be $(C, q_1, \dots, q_r)$.  Thus each $q'_{i}$ is contained in a tree of rational curves (which also contains $q'_{r+i}$) that is contracted by the stabilization map. Likewise, because $\pi(s')$ lies in $\overline{S}$ we see that $s'(q'_i) = p_i$ for all $i \leq k$.  For all $i,j \leq k$, generality of $D_j$ ensures it is disjoint from $p_i$. Because $s'(q'_{r+i})$ must lie in $D_i$, we see that the tree of rational curves containing $q'_{i}$ has to map to a curve in $X$ connecting $p_i$ to $D_i$; in particular, some component is not contracted by $s'$.
\end{proof}

The next proposition relates the dimension of a family of curves to the number of rational curves that can be broken off using Lemma \ref{lem:genusGBendandBreak}.

\begin{proposition}\label{prop:totalbreaking}
Let $X$ be a projective variety and let $C$ be a smooth projective curve of genus $g$ over an algebraically closed field of arbitrary characteristic.  
Suppose $M \subset \Mor(C,X)$ is an irreducible locally closed subvariety.   Set $k = \lfloor \frac{\dim M}{\dim X+1} \rfloor$ and let $s : C \to X$ be any map parametrized by $M$.  If $2g - 2 + k > 0$, then the closure of the image of $M$ in $\overline{\mathcal{M}}_{g,0}(X,\beta)$ parametrizes a map $s': C' \to X$ satisfying:
\begin{itemize}
\item $C'$ consists of the union of $C$ with at least $k$ trees of rational curves, and
\item at least $k$ of these trees contain an irreducible component $T$ such that $s'$ realizes $T$ as a non-contracted rational curve on $X$ that passes through a general point of $s(C)$.
\end{itemize}
\end{proposition}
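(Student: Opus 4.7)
My plan is to convert the dimension of $M$ into $k$ point-incidence conditions, producing a $k$-dimensional subfamily whose maps fix $k$ general points of $s(C)$, and then invoke Bend-and-Shatter (Lemma \ref{lem:genusGBendandBreak}) to break off one non-contracted rational tree per fixed point.

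Set $n = \dim X$ and fix $k$ general points $q_1, \dots, q_k$ on $C$, so that the marked curve $(C, q_1, \dots, q_k)$ is stable (using the hypothesis $2g - 2 + k > 0$). Put $p_i = s(q_i)$; by generality of the $q_i$, each $p_i$ is a general point of $s(C)$. I would then consider the evaluation morphism
\[
\ev : M \longrightarrow X^k, \qquad t \longmapsto (t(q_1), \dots, t(q_k)).
\]
Its image has dimension at most $nk$, so by the fiber-dimension theorem every non-empty fiber of $\ev$ has dimension at least $\dim M - nk \geq k(n+1) - nk = k$, where the first inequality uses $k = \lfloor \dim M / (n+1) \rfloor$. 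Next I would choose a $k$-dimensional irreducible locally closed subvariety $S \subset \ev^{-1}(p_1, \dots, p_k)$ whose closure contains $s$.

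Attaching the fixed marked points $q_1, \dots, q_k$ to each map in $S$ realizes $S$ as a $k$-dimensional locally closed substack of $\mathcal{M}_{g,k}(X, \beta)$, every map of which sends $q_i$ to $p_i$ (dimension is preserved since $\Aut(C, q_1, \dots, q_k)$ is finite by stability). Applying Lemma \ref{lem:genusGBendandBreak} with $r = k$ and all $k$ points fixed then produces a stable map $s' : (C', q_1', \dots, q_k') \to X$ in the closure of $S$ in $\overline{\mathcal{M}}_{g,k}(X, \beta)$ whose stabilization as a marked curve is $(C, q_1, \dots, q_k)$; each $q_i$ thus acquires an attached rational tree $C_i' \subset C'$ containing a component $T_i$ not contracted by $s'$, and each $T_i$ maps to a rational curve through $p_i$.

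Finally I would push $s'$ forward along the forgetful morphism $\overline{\mathcal{M}}_{g,k}(X, \beta) \to \overline{\mathcal{M}}_{g,0}(X, \beta)$. Since a component not contracted by the stable map is automatically stable after forgetting marked points, each $T_i$ survives stabilization, and the resulting map $s''$ lies in the closure of the image of $M$ in $\overline{\mathcal{M}}_{g,0}(X, \beta)$, with domain equal to $C$ together with $k$ rational trees, each containing a non-contracted component through a general point of $s(C)$. The step I expect to require the most care is the bookkeeping around the two distinct moduli pictures --- comparing the family $M \subset \Mor(C, X)$ with a substack of $\mathcal{M}_{g,k}(X, \beta)$ as required by Lemma \ref{lem:genusGBendandBreak} --- and verifying that the stabilization step never destroys the non-contracted rational components produced by Bend-and-Shatter.
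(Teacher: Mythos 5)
Your proposal is correct and follows essentially the same route as the paper: you impose $k$ point conditions at general points of $C$ to cut out a $k$-dimensional subfamily fixing $k$ general points of $s(C)$ (your evaluation-map fiber count is the same codimension estimate the paper makes via the subvarieties $T_i = \{\tilde{s} \mid \tilde{s}(q_i) = s(q_i)\}$), then apply Lemma \ref{lem:genusGBendandBreak} with $r = k$ and forget the markings. Your extra remarks --- that stability of $(C,q_1,\dots,q_k)$ makes the map to $\overline{\mathcal{M}}_{g,k}(X,\beta)$ generically finite, and that non-contracted components survive the final stabilization --- are exactly the points the paper's proof relies on.
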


\begin{proof}
Let $q_1, \ldots , q_k \in C$ be $k$ general points in $C$.  Let $T_i = \{\tilde{s} \in M \mid \tilde{s}(q_i) = s(q_i)\}$. Since $s \in T_i$ for all $i$, the intersection $S := \cap_i T_i$ is nonempty.  Moreover, as $\codim(T_i, M) \leq \dim X$, we get $\codim(S, M) \leq k(\dim X)$.  Thus $\dim S \geq k$.  

Because $2g - 2 + k > 0$, the natural map $\pi : S \to \overline{\mathcal{M}}_{g,k}(X, \beta)$ is generically finite.  Apply Lemma \ref{lem:genusGBendandBreak} to $\pi(S)$ and let $s': (C', q_1', \ldots , q_k') \to X$ be the stable map it identifies.  The desired stable map is obtained from $s'$ by forgetting the $k$ marked points and stabilizing.
\end{proof}

We are now equipped to prove Theorem \ref{theo:cleanbandb} via a dimension counting argument.

\begin{proof}[Proof of Theorem \ref{theo:cleanbandb}:]
First suppose that our ground field is algebraically closed of characteristic $p > 0$ and that $H$ is $\mathbb{Q}$-Cartier.   After rescaling $H$ we may suppose it is Cartier.  We write $i: C' \to X$ for the normalization of $C$.  For $m > 0$, let $s_m : C' \to X$ be the precomposition of $i$ with the $m^{\text{th}}$ iterate of the Frobenius.  
The dimension $d_{m}$ of $\Mor(C',X)$ at $s_m$ satisfies
$$d_{m} \geq p^m(-K_X \cdot i_*C') - g\dim X,$$
where $g$ is the genus of $C'$.  Let $k_{m} = \lfloor \frac{d_{m}}{\dim X + 1} \rfloor$.
For large $m$, Proposition \ref{prop:totalbreaking} allows us to find a deformation of $s_m$ that breaks off $k_{m}$ rational curves through $k_{m}$ general points of $s(C')$.  Because $H$ is nef, at least one of these rational curves has $H$-degree at most 
 \begin{align*}
 \frac{H\cdot s_{m*}C'}{k_{m}} =  \frac{p^m(H \cdot i_*C')}{k_{m}} \leq  \frac{p^m(H \cdot i_*C')}{p^{m}(-K_{X} \cdot i_{*}C') - (g + 1)\dim X}(\dim X + 1).
 \end{align*}
 For large enough $m$ the floor of this upper bound is at most $(\dim X + 1)\frac{H \cdot i_*C'}{-K_X \cdot i_*C'}$.  This proves that a general point of $s(C')$ is contained in a rational curve whose $H$-degree satisfies the desired inequality.  Since the existence of such a rational curve through a point is a closed condition, this statement holds for every closed point in $s(C')$, and in particular for $x$.

The extension to ample $\mathbb{Q}$-Cartier divisors in characteristic $0$ uses the spreading out argument of \cite[Step 3 of proof of Theorem 5]{MM86}.  The extension to nef $\mathbb{R}$-Cartier divisors follows as in \cite[Steps 4 and 5 of proof of II.5.8 Theorem]{Kollar}. 
\end{proof}

%\nocite{*}
\bibliographystyle{alpha}
\bibliography{bandb}
\end{document}